\newtheorem{thm}{Theorem}[section]
\newtheorem{prop}[thm]{Proposition}
\newtheorem{bla}[thm]{Theorem}
\newtheorem{lem}[thm]{Lemma}
\newcommand{\C}{\mathbb{C}}
\newcommand{\Pe}{\mathbb{P}}
\begin{document}

\title{\textbf{THE HYPERDETERMINANT VANISHES FOR ALL BUT TWO SCHUR FUNCTORS}}

\author{Alicia Tocino Sánchez }
\maketitle

\begin{abstract}
We recall the notion of hyperdeterminant of a multidimensional matrix (tensor). We prove that if we restrict the hyperdeterminant to
a skew-symmetric tensor $\bigwedge^p V\subseteq V^{\otimes p}$ with $p\geq 3$ then it vanishes. 
The hyperdeterminant also vanishes when we restrict it to the space $\Gamma^\lambda\otimes S_\lambda V\subset V^{\otimes p}$
where $\lambda$ is a Young diagram with $p$ boxes and $\lambda_2\ge 2$ or $\lambda_3\geq 1$. 
\end{abstract}

\section{Introduction}

	In \cite{CA} A. Cayley described for the first time the notion of hyperdeterminant of a multidimensional matrix (tensor) 
	which is analogous to the determinant of a square matrix. In \cite{OED} L. Oeding computed the hyperdeterminant for symmetric
	tensors, ${\mathrm Sym}^p V\subseteq V^{\otimes^p}$, describing all its irreducible factors. G. Ottaviani proposed in \cite{OTT} to compute 
	the irreducible factors of the hyperdeterminant of a skew-symmetric tensor  
	$\bigwedge^p V\subseteq V^{{\otimes p}}$. In this paper we give a solution not only for skew-symmetric tensors but also for 
	any Schur functor $\Gamma^\lambda V\otimes S_\lambda V\subseteq V^{\otimes p}$ where $\lambda$ is a Young diagram 
	with $|\lambda|=p$ and $\lambda_2\ge 2$ or $\lambda_3\geq 1$.
	
	\textbf{Main Theorem.} \textit{When $A\in\Gamma^{\lambda} V\otimes S_{\lambda}V\subseteq V^{\otimes p}$, $|\lambda|=p$ and $p\ge 2$, 
	$Det(A)$ can be nonzero only for $\lambda=(p)$ 
	(corresponding to the symmetric power ${\mathrm Sym}^p V$) and $\lambda=(p-1,1)$, where $\Gamma^{\lambda}V$ 
	(resp. $S_{\lambda}V$) is the  $\Sigma_p$-module (resp. $GL(V)-$module) associated to $\lambda$.}

	The case $p=2$ of \textit{Main Theorem} corresponds to skew symmetric matrices in $\bigwedge^2V$.
	In this case the determinant restricts to the square of the Pfaffian.

\section{Notations and preliminaries}


	Let $V$ be a complex vector space of dimension $n$ and let $\Sigma_p$ be the symmetric group of permutations over $p$ elements. We define
	
	\textit{tensor product:} $V^{\otimes ^p}=$ {\small $\{A:\underbrace{V^\vee\times\ldots\times V^\vee}_{p}\to \C$ multilinear $\}$ }
	
	\textit{skew-symmetric	tensor product:} $\bigwedge^{p} V=$
	\\{\small $\{A:\underbrace{V^\vee\times \ldots\times V^\vee}_{p}\to \C$ multilinear 
	s.t. $A(x_1,\ldots,x_p)=sign(\sigma)A(x_{\sigma(1)},\ldots,x_{\sigma(p)})$, $\forall\sigma\in\Sigma_p\}$}
			
	\textit{symmetric tensor product:} ${\mathrm Sym}^p V=$
	\\{\small $\{A:\underbrace{V^\vee\times \ldots\times V^\vee}_{p}\to \C$ 
	multilinear s.t. $A(x_1,\ldots,x_p)=A(x_{\sigma(1)},\ldots,x_{\sigma(p)})$, $\forall\sigma\in\Sigma_p\}$}
		
	We recall the definition of Schur functor, see \cite{FH}.	A \textit{Young diagram} denoted by
	$\lambda=(\lambda_1,\lambda_2,\ldots,\lambda_m)$, where $\lambda_1\geq\lambda_2\geq\ldots\geq\lambda_m\geq 0$ 
	consists of a collection of boxes ordered in 
	consecutive rows, where the $i-$th row has exactly $\lambda_i$ boxes. The number of boxes of $\lambda$ is denoted by 
	$|\lambda|=\lambda_1+\lambda_2+\ldots+\lambda_m$. 
	We also denote by $\lambda^*$ the conjugate Young diagram of $\lambda$.
			 
		 
	Any filling of $\lambda$ with numbers is called a \textit{tableau}. Just to fix convention, for a given Young diagram, number 
	the boxes consecutively  (from left to right and top to bottom). Here we use all numbers from $1$ to $p$ in order to fill $p$ boxes. 
	More generally, a tableau can allow repetitions
	of numbers. Each filling describes a vector in $V^{\otimes p}$.
		 
	Let $\Sigma_p$ be the symmetric group of permutations over $p$ elements. Due to the filling, we can consider the elements of $\Sigma_p$
	as permuting the boxes. Let
	$$P=P_{\lambda}=\{\sigma\in\Sigma_p\,:\,\sigma\,\,preserves\,\,each\,\,row\}$$
	$$Q=Q_\lambda=\{\sigma\in\Sigma_p\,:\,\sigma\,\,preserves\,\,each\,\,column \}$$
	They depend on $\lambda$ but also on the filling of $\lambda$.
			
	In the group algebra $\C\Sigma_p$, we introduce two elements corresponding to these subgroups
	
	$$a_\lambda=\sum_{\sigma\in P}e_\sigma$$  $$b_\lambda=\sum_{\sigma\in Q}sign(\sigma)\cdot e_\sigma$$
	
	As $\Sigma_p$ acts on the $p-$th tensor power $V^{\otimes p}$ by permuting factors, then
	$$Im(a_\lambda)=S^{\lambda_1}V\otimes\ldots\otimes S^{\lambda_m} V\subseteq V^{\otimes p}$$
	$$Im(b_\lambda)=\bigwedge^{\mu_1}V\otimes\ldots\otimes\bigwedge^{\mu_l}V\subseteq V^{\otimes p}$$
	where $\mu$ is the conjugate partition of $\lambda$.

	Finally, we set  $c_\lambda=a_\lambda\cdot b_\lambda\in\C\Sigma_p $ that
	is called the \textit{Young symmetrizer} corresponding to $\lambda$. We denote the image of 
	$c_\lambda$ on $\C\Sigma_p$ by $\Gamma^\lambda V$ , it is a irreducible $\Sigma_p$-module.
	$$\Gamma^\lambda V=Im(c_\lambda|_{\C\Sigma_p})$$
	We call the functor that associates $V\leadsto \Gamma^\lambda V$ the \textit{Schur functor}. 
	We also denote by $S_\lambda V$ the image of $c_\lambda$ acting on $V^{\otimes p}$ 
	which is an irreducible GL(V)-module and is non zero if and only if
	the number of rows is less or equal than $n$. With these definitions we can state the Schur-Weyl duality 
	(see \cite{PRO}): 
	
	\textit{There is a $\Sigma_p \times SL(V)$-decomposition $V^{\otimes ^p}=\oplus_\lambda \Gamma^\lambda V\otimes S_{\lambda} V$
	where the sum is extended on all Young diagrams with $p$ boxes}

	With the purpose of giving the equations for $\Gamma^\lambda V\otimes S_\lambda V$ where $\lambda$ is a Young 
	diagram with	$p$ boxes we will state Theorem \ref{T2}. 
	To know how the equations work we need some notations and
	definitions from \cite{ROTA}. 
	We will denote by $\pi$ the partitions of the set $\{1,2,\ldots,p\}$ and for any permutation $\sigma\in\Sigma_p$ we denote by
	$par(\sigma)$ the partition obtained by the cycle decomposition of $\sigma$.
	From a partition $\pi$ we can get the corresponding Young diagram and we denote it by
	$shape(\pi)=\lambda=(\lambda_1,\lambda_2,\ldots,\lambda_m)$, where every $\lambda_i$ are the sizes of the blocks 
	of $\pi$ (arranged 
	in non-increasing order).
	
	We need to define a partial order in the set of Young diagrams to give the main definition of the theorem. 
	Let $\lambda=(\lambda_1,\lambda_2,\ldots\lambda_m)$ and $\mu=(\mu_1,\mu_2,\ldots,\mu_r)$ be two Young diagrams (if it is necessary, 
	the last entries of each diagram are filled with zero). Write $\lambda\leq\mu$
	if and only if the following conditions hold:
	\begin{itemize}
		\item $\lambda_1+\lambda_2+\ldots+\lambda_m=\mu_1+\mu_2+\ldots+\mu_r$,
		\item $\lambda_1\leq\mu_1$,
		\item $\lambda_1+\lambda_2\leq\mu_1+\mu_2$,
		\item $\ldots$
	\end{itemize}
	
	According to \cite{ROTA}, the set of minimal elements of the complement of the partially ordered 
	set $\{\mu\,:\,\mu\leq\lambda\}$ is called \textit{critical set}
	of the Young diagram $\lambda$.
	To state Theorem \ref{T2} we also define $Pos(\pi)$ as an element of the
	group algebra $\C\Sigma_p$ given by $\Sigma\sigma$ where the sum is extended to all 
	permutations $\sigma$ such that $par(\sigma)$ is a refinement of $\pi$.
	
			
	
	\begin{bla}\label{T2}(Rota-Stein[4], Theorem at pag. 848)
	If $A\in \Gamma^{\lambda}V\otimes S_{\lambda}V$, and $\pi$ is a partition such that
	$shape(\pi)$ belongs to the critical set of $\lambda$, then
	\begin{equation}\label{poseq}
	\sum_{\sigma\in Pos(\pi)}A(x_{\sigma(1)},\ldots,x_{\sigma(n)})=0
	\end{equation}
	\end{bla}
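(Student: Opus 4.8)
The plan is to read the displayed identity \eqref{poseq} as the assertion that the group-algebra element $Pos(\pi)\in\C\Sigma_p$ annihilates $A$, and then to identify $Pos(\pi)$ with a conjugate of a row symmetriser of the shape $\mu:=shape(\pi)$. First I would unwind the definition of $Pos(\pi)$: a permutation $\sigma$ satisfies that $par(\sigma)$ is a refinement of $\pi$ exactly when every cycle of $\sigma$ has its support inside a single block of $\pi$, which, because the blocks of $\pi$ are pairwise disjoint, is equivalent to $\sigma$ stabilising each block of $\pi$. Hence $Pos(\pi)=\sum_{\sigma\in\Sigma_\pi}e_\sigma$, where $\Sigma_\pi\le\Sigma_p$ denotes the Young subgroup $\prod_i\mathrm{Sym}(B_i)$ attached to the blocks $B_1,\ldots,B_k$ of $\pi$. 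Since the multiset of sizes of the blocks of $\pi$ is exactly the multiset of parts of $\mu$, the subgroup $\Sigma_\pi$ is conjugate in $\Sigma_p$ to the row stabiliser $P_\mu$ of the standard filling of $\mu$; choosing $\tau\in\Sigma_p$ with $\tau\,\Sigma_\pi\,\tau^{-1}=P_\mu$ gives $Pos(\pi)=\tau^{-1}a_\mu\tau$ in $\C\Sigma_p$, where $a_\mu=\sum_{\sigma\in P_\mu}e_\sigma$ as in the preliminaries. Because $M_\lambda:=\Gamma^\lambda V\otimes S_\lambda V$ is a $\Sigma_p$-submodule of $V^{\otimes p}$ and $A\in M_\lambda$, it suffices to prove that $a_\mu$ kills $M_\lambda$, i.e. $a_\mu(M_\lambda)=0$.

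For that step I would invoke the description of the image of $a_\mu$ recorded above, $\mathrm{Im}(a_\mu)=S^{\mu_1}V\otimes\cdots\otimes S^{\mu_l}V\subseteq V^{\otimes p}$, together with Young's rule: as a $GL(V)$-module this tensor product decomposes as $\bigoplus_\rho K_{\rho\mu}\,S_\rho V$, where the Kostka number $K_{\rho\mu}$ is nonzero precisely when $\mu\le\rho$ in the order used in this paper (with $K_{\mu\mu}=1$). Now the hypothesis that $shape(\pi)=\mu$ lies in the critical set of $\lambda$ says exactly that $\mu\not\le\lambda$; hence $K_{\lambda\mu}=0$, so $S_\lambda V$ does not occur as a summand of $\mathrm{Im}(a_\mu)$. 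Since $a_\mu$ is $GL(V)$-equivariant, $a_\mu(M_\lambda)$ is the image of the $GL(V)$-submodule $M_\lambda$ under $a_\mu$, hence a $GL(V)$-quotient of $M_\lambda$, which by Schur--Weyl duality is a direct sum of copies of the single irreducible $S_\lambda V$; thus $a_\mu(M_\lambda)$ is $S_\lambda V$-isotypic, and being a $GL(V)$-submodule of $\mathrm{Im}(a_\mu)$, which contains no copy of $S_\lambda V$, it must vanish.

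Combining the two parts, $Pos(\pi)\cdot A=\tau^{-1}a_\mu\tau\cdot A\in\tau^{-1}\bigl(a_\mu(M_\lambda)\bigr)=0$, using once more that $\tau\cdot A\in M_\lambda$ by $\Sigma_p$-invariance of $M_\lambda$; evaluating this identity of multilinear forms at $(x_1,\ldots,x_p)$ --- and noting that a sum of the permutation action over a subgroup is insensitive to replacing each $\sigma$ by $\sigma^{-1}$ --- yields precisely \eqref{poseq}. The step I expect to carry the real content is the dictionary between the combinatorics and the representation theory: verifying that ``$par(\sigma)$ is a refinement of $\pi$'' is genuinely the same as ``$\sigma$ stabilises the blocks of $\pi$'', so that $Pos(\pi)$ is an honest Young-subgroup sum rather than something larger, and then matching ``$\mu$ lies in the critical set of $\lambda$'', that is $\mu\not\le\lambda$, with the vanishing $K_{\lambda\mu}=0$ via Young's rule; once these identifications are in place the conclusion is immediate from Schur's lemma. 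One should also dispose of the harmless degenerate cases: if $A=0$ there is nothing to prove, and if $\lambda$ has more than $\dim V$ rows then $S_\lambda V=0$, so again $A=0$.
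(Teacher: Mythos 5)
Your proof is correct, but there is nothing in the paper to compare it against: the statement is quoted from Rota--Stein \cite{ROTA} and the paper offers no proof of it, so your argument is a genuine (and welcome) self-contained substitute. The chain of reductions is sound. Indeed, $par(\sigma)$ refines $\pi$ exactly when every cycle of $\sigma$ lies in a single block, i.e.\ when $\sigma$ preserves each block setwise, so $Pos(\pi)=\sum_{\sigma\in\Sigma_\pi}e_\sigma$ for the Young subgroup $\Sigma_\pi$, which is conjugate in $\Sigma_p$ to the row stabiliser $P_\mu$ of $\mu=shape(\pi)$; since $\Gamma^\lambda V\otimes S_\lambda V$ is a $\Sigma_p\times GL(V)$-submodule that is $S_\lambda V$-isotypic for $GL(V)$, while $Im(a_\mu)\cong S^{\mu_1}V\otimes\cdots\otimes S^{\mu_l}V$ contains $S_\lambda V$ with multiplicity $K_{\lambda\mu}$, and $K_{\lambda\mu}=0$ because $\mu$ lying in the critical set forces $\mu\not\le\lambda$ in the dominance order of the paper, Schur's lemma gives $a_\mu\bigl(\Gamma^\lambda V\otimes S_\lambda V\bigr)=0$ and hence $Pos(\pi)\cdot A=0$. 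The only imported facts are Young's rule and the standard positivity criterion $K_{\lambda\mu}>0$ iff $\mu\le\lambda$, both classical; your remark that a sum over a subgroup is unchanged under $\sigma\mapsto\sigma^{-1}$ correctly neutralises the left/right action ambiguity in \eqref{poseq}. One observation: you never use minimality of $\mu$ within the complement of $\{\nu:\nu\le\lambda\}$, so your argument actually establishes the positive equations for \emph{every} partition $\pi$ with $shape(\pi)\not\le\lambda$, a mild strengthening of the quoted statement that is consistent with how \cite{ROTA} and Lemma \ref{lem1} use it.
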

	
	Each equation stated in the previous theorem is called a \textit{positive equation} and we will denote it
	for brevity as $Pos(\pi)\cdot A=0$.

	We are interested in the product $\underbrace{V\otimes\ldots \otimes V}_{p}$, 
	where the group $\underbrace{GL(V)\times\ldots\times GL(V)}_{p}$ acts in a natural way. Once a basis is fixed in $V$, the tensors 
	can be represented as multidimensional matrices of format $\underbrace{(n)\times\ldots\times (n)}_{p}$.

	A feature of the Segre variety is that it contains a lot of linear subspaces. For any point $x=x_1\otimes\ldots\otimes x_p$,
	the linear space $x_1\otimes\ldots\otimes V_i\otimes\ldots\otimes x_p$ passes through $x$ for $i=1,\ldots,p$. 	Let $x\in 
	X=\Pe^{k_1}\times\ldots\times\Pe^{k_p}=\Pe(V_1)\times\ldots\times \Pe(V_p)$. Then the tangent space at $x$, $T_x X$ 
	is the projectivization of $\oplus_i <x_1>\otimes\ldots \otimes V_i\otimes \ldots\otimes <x_p>$ (see \cite{OTT}). 

	
	

	Now consider $X\subseteq\Pe(V)$ a projective smooth irreducible variety. We say that a hyperplane $H$ is \textit{tangent} to $X$ if
	$H$ contains the tangent space to $X$ at some  point $x\in X$. This means $T_x X\subseteq H$.
	The set of tangent hyperplanes to $X$ has a natural structure of projective irreducible variety, denoted by 
	$X^\vee\subseteq \Pe(V^\vee)$ and called the \textit{dual variety} of $X$.

	Consider the product $X=\underbrace{\Pe^{n-1}\times \ldots\times\Pe^{n-1}}_{p}$ of several projective
		spaces in the Segre embedding into the projective space $\Pe(V^{\otimes ^p})$. 
		The \textit{hyperdeterminant} of format $\underbrace{(n)\times\ldots\times  (n)}_{p}$
		is a homogeneous polynomial	function on $V^{\otimes ^p}$ which is a defining equation of the projectively dual
		variety $X^\vee$. We denote the
		hyperdeterminant by \textit{Det}. For its main properties see \cite{GKZ} and \cite{OTT}.

	
	

	From now on we will refer to a multidimensional matrix $A$ as simply a matrix and we will write $Det(A)$ for its 
	hyperdeterminant (when it exists). The following definition is the main point of the final argument.
	
	 According to \cite{GKZ}, a matrix $A$ is called \textit{degenerate} if there exists  
	 $x^1\otimes\ldots\otimes x^p\in V\otimes\ldots\otimes V$ nonzero such that 
	 $A(x^1,\ldots,\underbrace{V}_{i},\ldots, x^p) = 0\,\,\,\, \forall\, i=1,\ldots,p$.

		By definition the \textit{kernel} is\\
		 $K(A)=\{(x^1\otimes\ldots\otimes x^p)\in V\otimes \ldots \otimes V$ such that 
			$A(x^1,\ldots,V,\ldots,x^p)=0$, $\forall\,i=1,\ldots,p\}$

		
	From these definitions we get, always from \cite{GKZ}, that: 
	
	\begin{prop}
	The following are equivalent:
			\begin{itemize}
				\item $Det(A)=0$
				\item $A$ is degenerate
				\item $K(A)\neq \phi$
			\end{itemize}
	\end{prop}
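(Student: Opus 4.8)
\emph{Proof idea.} Since the hyperdeterminant is, by definition, a defining equation of the dual variety $X^\vee$ of the Segre variety $X=\Pe^{n-1}\times\cdots\times\Pe^{n-1}$ (we are of course in the case where $Det$ exists, i.e.\ where $X^\vee$ is a hypersurface), the equation $Det(A)=0$ says exactly that the hyperplane $H_A=\{\,A=0\,\}$ represents a point of $X^\vee$. The plan is therefore to establish the two identifications
\[
\{\,[A]\ :\ Det(A)=0\,\}\ =\ X^\vee\ =\ \{\,[A]\ :\ T_xX\subseteq H_A\ \text{for some }x\in X\,\},
\]
the first being the definition of $Det$, the second being the place where smoothness of the Segre variety enters, and after that to translate the tangency condition $T_xX\subseteq H_A$ into the degeneracy of $A$ by means of the explicit description of $T_xX$ recalled above. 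The equivalence ``$A$ degenerate $\iff K(A)\neq\phi$'' will then be a tautology: a nonzero decomposable tensor $x^1\otimes\cdots\otimes x^p$ with $A(x^1,\ldots,V,\ldots,x^p)=0$ for all $i$ is precisely an element exhibiting that $K(A)$ is non-empty.

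For the second identification I would form the incidence variety $\mathcal Z=\{\,(x,H)\ :\ x\in X,\ T_xX\subseteq H\,\}$ inside the product of $X$ with the dual projective space parametrizing hyperplanes. The first projection $\mathcal Z\to X$ has, over a point $x$, the fibre consisting of all hyperplanes that contain the fixed linear subspace $T_xX$, which is itself a projective space; hence $\mathcal Z\to X$ is a projective bundle. In particular $\mathcal Z$ is irreducible and complete, so its image under the second projection is a closed irreducible subset of the dual space. Since the Segre variety is smooth \emph{everywhere}, $X^\vee$ is by definition the closure of exactly this image, and as the image is already closed the two coincide. This yields $[A]\in X^\vee\iff T_xX\subseteq H_A$ for some $x\in X$, and therefore $Det(A)=0\iff T_xX\subseteq H_A$ for some $x\in X$.

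To finish, it remains to unwind the tangency condition. Writing $x=[x^1\otimes\cdots\otimes x^p]$ with every $x^i\neq 0$, the tangent space to the Segre variety at $x$ is the projectivization of $\bigoplus_{i=1}^{p}\langle x^1\rangle\otimes\cdots\otimes V_i\otimes\cdots\otimes\langle x^p\rangle$, so $T_xX\subseteq H_A$ holds if and only if $A$ vanishes on each summand, that is, $A(x^1,\ldots,V,\ldots,x^p)=0$ for every $i=1,\ldots,p$. This is literally the definition of $A$ being degenerate, and at the same time the statement that $x^1\otimes\cdots\otimes x^p\in K(A)$; chaining the equivalences gives $Det(A)=0\iff A\text{ degenerate}\iff K(A)\neq\phi$. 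The one genuinely delicate point is the superfluousness of the closure in the middle step: it rests essentially on the Segre variety being nonsingular, since for a singular $X$ one must restrict to tangent hyperplanes at smooth points and the resulting family need not be closed, so the equivalence could already fail on the boundary of $X^\vee$. The remaining bookkeeping --- the pairing between $V$ and $V^\vee$ implicit in the hyperplane $H_A$, and the convention that the factors of a kernel element are taken nonzero --- is routine.
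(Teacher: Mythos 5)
Your argument is correct. Note that the paper itself does not prove this proposition at all --- it simply records it as a consequence of the definitions "always from \cite{GKZ}" --- so what you have written is not a variant of the paper's proof but a filling-in of the reference. Your chain is exactly the standard GKZ argument: (i) the zero locus of $Det$ is $X^\vee$, which requires both that $X^\vee$ is a hypersurface (true for the cube format $\underbrace{(n)\times\cdots\times(n)}_{p}$, since the relevant inequality $n-1\le (p-1)(n-1)$ holds for $p\ge 2$) and that $X^\vee$ is irreducible, so that its defining equation cuts it out set-theoretically; (ii) the incidence correspondence $\mathcal Z\to X$ is a projective bundle because $X$ is smooth of constant tangent dimension, whence the tangent-hyperplane locus is already closed and no boundary points of $X^\vee$ escape the tangency description; (iii) the explicit tangent space $\bigoplus_i \langle x^1\rangle\otimes\cdots\otimes V_i\otimes\cdots\otimes\langle x^p\rangle$ translates tangency of $H_A$ at $[x^1\otimes\cdots\otimes x^p]$ into $A(x^1,\ldots,V,\ldots,x^p)=0$ for all $i$, which is verbatim the definition of degeneracy and of membership in $K(A)$. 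You are also right to flag the remaining bookkeeping: the paper treats $A$ as a multilinear form on $V^\vee\times\cdots\times V^\vee$ while feeding it vectors of $V$, so strictly speaking the hyperplane $H_A$ lives in the dual Segre picture and one should either dualize $X$ or identify $V$ with $V^\vee$; this is a harmless convention issue, not a gap. What your proof buys over the paper's bare citation is precisely the visibility of where smoothness of the Segre variety and the hypersurface hypothesis on $X^\vee$ are used.
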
 
	This gives a geometric reformulation of the definition of hyperdeterminant.

\section{Proof of main theorem}

	Using the previous characterization we will get that the hyperdeterminant vanishes if we restrict it to 
	skew-symmetric multidimensional matrices.
	We use the skewness properties:\\if $A\in\bigwedge^{p}V\subseteq V^{\otimes p}$  then $A(x_1,\ldots,x_p)=0$ if $x_i=x_j$
	for some $i,j=1,\ldots,p$.

	\begin{prop} If $p\geq 3$, every $A\in\bigwedge^p V$ is a degenerate matrix, indeed every $x\otimes\ldots\otimes x\in K(A)$
	for all $x\in V$.
	\end{prop}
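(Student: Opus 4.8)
The plan is to produce, for an arbitrary nonzero $x\in V$, an explicit nonzero element of $K(A)$ --- namely the decomposable tensor $x\otimes\cdots\otimes x$ --- and then invoke the Proposition above (degenerate $\iff K(A)\neq\emptyset\iff Det(A)=0$). So first I would unwind what membership in $K(A)$ demands for this tensor: we must check
\[
A(x,\ldots,\underbrace{V}_{i},\ldots,x)=0\qquad\text{for every }i=1,\ldots,p,
\]
that is, $A(x,\ldots,x,v,x,\ldots,x)=0$ for every vector $v$ placed in the $i$-th slot and every index $i$.

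The only point left to check is then immediate. By the skewness property of $A\in\bigwedge^{p}V$ recalled above, $A$ annihilates any tuple of arguments in which two entries agree. In the tuple $(x,\ldots,x,v,x,\ldots,x)$ there are $p-1$ entries equal to $x$, and the hypothesis $p\ge 3$ gives $p-1\ge 2$; hence two of the arguments coincide (both equal to $x$), no matter what $v$ is and no matter which slot $i$ was singled out. Therefore $A(x,\ldots,x,v,x,\ldots,x)=0$ identically, which is exactly the condition displayed above, and so $x\otimes\cdots\otimes x\in K(A)$.

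To conclude, since $V\neq 0$ we may take $x\neq 0$, so $x\otimes\cdots\otimes x$ is a genuine nonzero point of $K(A)$; thus $A$ is degenerate (and $Det(A)=0$). There is essentially no obstacle in this argument --- the whole content is the inequality $p-1\ge 2$, which is precisely why the hypothesis $p\ge 3$ is needed. For $p=2$ the tuple $(v,x)$ carries no forced repetition, and indeed a generic element of $\bigwedge^{2}V$ is nondegenerate; this is the Pfaffian case mentioned after the Main Theorem.
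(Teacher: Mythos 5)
Your argument is correct and is essentially the same as the paper's: both exploit the skewness property to conclude that $A(x,\ldots,x,v,x,\ldots,x)=0$ because $p-1\ge 2$ of the arguments coincide, hence $x\otimes\cdots\otimes x\in K(A)$ and $A$ is degenerate. The extra remarks on $p=2$ and the Pfaffian match the paper's discussion but are not needed for the proof itself.
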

	
	\begin{proof}
	$A(x,\ldots,x,y,x,\ldots,x)=0$ for every $x,y\in V$ because two elements coincide and $p\geq 3$.
	Hence $x\otimes\ldots\otimes x\in K(A)$ and $A$ is degenerate.
	\end{proof}

	In order to have the same result for the case $A\in \Gamma^\lambda V\otimes S_\lambda V\subseteq V^{\otimes p}$ 
	where $\lambda$ is a 
	Young diagram with $p$ boxes and $\lambda_2\geq 2$ or $\lambda_3\geq 1$  
	we need two lemmas before giving the proof of the main theorem.

	
	
	
	\begin{lem}\label{lem1}
	If $Pos(\pi)\cdot A=0$ where $\pi$ ranges over all partitions of $\{1,2,\ldots,p\}$ such that $shape(\pi)$ is 
	a Young diagram with at least two rows and $p$ boxes, then $A$ is degenerate. 			
	\end{lem}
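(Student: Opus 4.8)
The goal: if $A$ satisfies all positive equations $\mathrm{Pos}(\pi)\cdot A = 0$ coming from partitions $\pi$ whose shape has at least two rows, then $A$ is degenerate, i.e., $K(A)\neq\emptyset$.

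Let me think about what these positive equations say. A partition $\pi$ of $\{1,\dots,p\}$ with $\mathrm{shape}(\pi)$ having at least two rows means $\pi$ has at least two blocks. The simplest such: shape $(p-1,1)$, i.e., $\pi$ has one block of size $p-1$ and one singleton. Then $\mathrm{Pos}(\pi) = \sum \sigma$ over $\sigma$ with $\mathrm{par}(\sigma)$ refining $\pi$. If $\pi = \{\{1,\dots,p-1\},\{p\}\}$, refinements of $\pi$ as set partitions... wait, $\mathrm{par}(\sigma)$ is the partition by cycle type — actually by cycle *structure* (which elements are in which cycle). So $\mathrm{par}(\sigma)$ refines $\pi$ means every cycle of $\sigma$ is contained in a block of $\pi$. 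So for $\pi = \{\{1,\dots,p-1\},\{p\}\}$: $\sigma$ must fix $p$ and permute $\{1,\dots,p-1\}$ arbitrarily. So $\mathrm{Pos}(\pi)\cdot A = \sum_{\tau \in \Sigma_{p-1}} A(x_{\tau(1)},\dots,x_{\tau(p-1)}, x_p) = 0$.

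The plan is to exhibit, for an arbitrary nonzero vector $x\in V$, the decomposable tensor $x\otimes\cdots\otimes x$ as an element of $K(A)$; since this tensor is nonzero, $A$ is then degenerate by the proposition above. So it suffices to show that for every index $i\in\{1,\dots,p\}$ and every $v\in V$ one has $A(x,\dots,x,v,x,\dots,x)=0$, where $v$ occupies the $i$-th argument — this is exactly the condition $A(x,\dots,V,\dots,x)=0$ in each slot that defines membership in $K(A)$.

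To produce this vanishing I would isolate the $i$-th slot by a suitable partition. Let $\pi_i$ be the partition of $\{1,\dots,p\}$ whose two blocks are $\{1,\dots,p\}\setminus\{i\}$ and $\{i\}$. Its shape is $(p-1,1)$, a Young diagram with two rows and $p$ boxes, so the hypothesis gives $Pos(\pi_i)\cdot A=0$. The element $Pos(\pi_i)$ is the sum of the $(p-1)!$ permutations $\sigma\in\Sigma_p$ for which $par(\sigma)$ is a refinement of $\pi_i$; since every cycle of such a $\sigma$ must lie inside $\{1,\dots,p\}\setminus\{i\}$ or inside $\{i\}$, these are precisely the permutations fixing $i$. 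Hence the positive equation reads
$$\sum_{\sigma\in\Sigma_p,\ \sigma(i)=i} A\big(x_{\sigma(1)},\dots,x_{\sigma(p)}\big)=0 .$$

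Now evaluate this identity at the arguments $x_i=v$ and $x_\ell=x$ for every $\ell\neq i$. For any $\sigma$ fixing $i$, the $i$-th argument of $A(x_{\sigma(1)},\dots,x_{\sigma(p)})$ equals $x_{\sigma(i)}=x_i=v$, while each remaining argument $x_{\sigma(\ell)}$ (with $\ell\neq i$, hence $\sigma(\ell)\neq i$) equals $x$. Thus every one of the $(p-1)!$ summands equals $A(x,\dots,x,v,x,\dots,x)$ with $v$ in position $i$, and the identity becomes $(p-1)!\,A(x,\dots,x,v,x,\dots,x)=0$, so $A(x,\dots,x,v,x,\dots,x)=0$. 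As $i$ and $v$ are arbitrary, $x\otimes\cdots\otimes x$ is a nonzero decomposable tensor in $K(A)$, and $A$ is degenerate.

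The argument is essentially formal; the one point requiring care is the correct reading of $Pos(\pi)$ — that $par(\sigma)$ being a refinement of $\pi$ means each cycle of $\sigma$ sits inside a single block of $\pi$ — together with the verification that the partitions $\pi_i$ genuinely fall under the hypothesis, i.e. that the shape $(p-1,1)$ has at least two rows. (For the main theorem the relevant case is $p\geq 3$, where this is clear; for $p=2$ the shape $(1,1)$ already forces $A=0$, so the statement is trivial there.)
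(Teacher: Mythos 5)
Your proof is correct and reaches the conclusion by a genuinely different, more economical route than the paper's. The paper fixes a shape $\mu$ with at least two rows, treats $A(y,x,\dots,x),\dots,A(x,\dots,x,y)$ as unknowns, and uses the two-block partitions $\{I,I^{c}\}$ with $|I|=\mu_1$ to build a homogeneous linear system among $\mu_1+1$ of these unknowns whose matrix $M$ has zeros on the diagonal and ones elsewhere; it then computes $\det M=(-1)^{\mu_1}\mu_1\neq 0$ to force all unknowns to vanish. You bypass the linear algebra entirely by choosing the partitions $\pi_i=\{\{i\},\{1,\dots,p\}\setminus\{i\}\}$ of shape $(p-1,1)$: your identification of $Pos(\pi_i)$ with the sum over the $(p-1)!$ permutations fixing $i$ is right, and each positive equation then collapses to $(p-1)!\,A(x,\dots,v,\dots,x)=0$, killing one unknown at a time. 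Both arguments exhibit the same kernel element $x\otimes\cdots\otimes x$. One remark on how the two proofs sit inside the paper: your argument consumes only the positive equations of shape $(p-1,1)$, which the lemma's hypothesis literally grants; but in the application to the Main Theorem the quoted Rota--Stein theorem supplies the equations only for shapes in the critical set of $\lambda$, and that set contains $(p-1,1)$ only when $\lambda=(p-2,2)$. This is not a defect of your proof of the lemma as stated, and the paper's own proof has the same dependence, since the two-block shapes it uses also generally lie outside the critical set; to close the loop one observes that $Pos(\pi)$ is the product of the symmetrizers over the blocks of $\pi$, hence annihilates $\Gamma^{\lambda}V\otimes S_{\lambda}V$ whenever $shape(\pi)\not\leq\lambda$, which for $shape(\pi)=(p-1,1)$ is exactly the case $\lambda\neq(p),(p-1,1)$.
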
 
	
	\begin{proof}
		
	Let $x,y\in V$. We consider the $p$ unknowns
	$A(y,x,\ldots,x)$,  $A(x,y,x,\ldots,x)$,$\ldots$, \\$A(x,\ldots,x,y)$.
	Since $shape(\pi)=\mu$ has at least two rows, we can consider $\mu_1+1$ of these unknowns 
	and construct a homogeneous linear system.
	
	In the first place, we consider the first $\mu_1+1$ unknowns. 
	By taking the partition constituted by $I=\{2,3,...,\mu_1+1\}$ and its complement
	in $\{1,...,p\}$, and substituting $x_2 = y$ and $x_i = x$ for $i\in I$ with $i\neq 2$, and also $x_i=x$ for $i\notin I$ on the
	positive equations (\ref{poseq}), we get an equation between these unknowns (that correspond with the
	first row of the following matrix $M$ ). By taking the partition constituted by the subset 
	$I = \{1, 2,\ldots, \hat{k},\ldots,\mu_1+1\}$
	with $k \neq 1$ and its complement
	in $\{1,...,p\}$, and substituting $x_1 = y$ and $x_i = x$ for $i\in I$ with $i\neq 1$, and also $x_i=x$ for $i\notin I$ on the
	positive equations (\ref{poseq}), we get $\mu_1$ equations
	between these unknowns (each of them corresponding to the $k$-th row of the following matrix $M$).
	In this way, we get $\mu_1+1$ equations and a homogeneous linear system with the following matrix:
	\begin{equation*}
		M=\left(
			\begin{array}{cccccccc}
			0 		& 1 		& 1 		& 1 		& \ldots 		& 1 		& 1 		& 1\\
			1 		& 0 		& 1 		& 1 		& \ldots 		& 1 		& 1 		& 1 \\
			1 		& 1 		& 0 		& 1 		& \ldots 		& 1 		& 1 		& 1\\
			\vdots  & \vdots    & \vdots	& \vdots	& \vdots		& \vdots	& \vdots	& \vdots\\
			1 		& 1			& 1 		& 1  		& \ldots		& 1			& 0			& 1\\
			1 		& 1			& 1 		& 1  		& \ldots		& 1			& 1			& 0
			\end{array}\right)
	\end{equation*} 
 
 	We can easily see that $M$ is non degenerate. To do this we have to compute the determinant of $M-t\cdot I$.
 	For $t=-1$ we get that $det(M-t\cdot I)=0$. As the rank is $1$, the multiplicity of $-1$ is $\mu_1$, and since the trace is zero,
 	we get that the eigenvalues are just $\underbrace{-1,\ldots,-1}_{\mu_1},(\mu_1)$. Therefore $det(M)=(-1)^{\mu_1}(\mu_1)$
 	and the $\mu_1+1$ unknowns are all zero. 
 
 	We may repeat the same
 	argument for any other subset of $\{\mu_1 + 1\}$ unknowns among the $p$ unknowns.
 	We conclude that $A(y, x, \ldots , x) = A(x, y, x, \ldots , x) =\ldots = A(x, \ldots , x, y) = 0$, so that 
 	$x\otimes\ldots\otimes x\in K(A)$ and $A$ is degenerate.
 		
	
	
	\end{proof} 
	
	\begin{lem}\label{lem2}
	For every partition $\pi$ of $\{1,2,\ldots,p\}$, 
	with $shape(\pi)\neq (p), (p-1,1)$, the critical set of $shape(\pi)$ is composed by Young diagrams
	with at least two rows.
	\end{lem}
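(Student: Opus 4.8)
The plan is to translate the statement into a short computation in the dominance order $\leq$ on Young diagrams with $p$ boxes. Since $(p)$ is the only such diagram having a single row, it suffices to prove that, whenever $\lambda := shape(\pi)$ is different from $(p)$ and $(p-1,1)$, the diagram $(p)$ does not belong to the critical set of $\lambda$; every remaining member of that critical set then automatically has at least two rows.

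First I would record two elementary properties of the order $\leq$. On the set of Young diagrams with $p$ boxes, $(p)$ is the maximum element: for any such $\mu$ we have $\mu_1 + \cdots + \mu_i \leq p = (p)_1 + \cdots + (p)_i$ for every $i$, which is exactly the condition $\mu \leq (p)$. Next, for such a $\mu$ one has $\mu \leq (p-1,1)$ if and only if $\mu_1 \leq p-1$ — the inequality on the first partial sum is the only nontrivial one, the others following from $|\mu| = p$ — and $\mu_1 \leq p-1$ is equivalent to $\mu \neq (p)$. Hence $(p-1,1)$ is the largest Young diagram with $p$ boxes that is distinct from $(p)$.

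Now assume $\lambda \neq (p), (p-1,1)$. The key step is to check that $(p-1,1) \not\leq \lambda$: if instead $(p-1,1) \leq \lambda$, then $p-1 \leq \lambda_1$ and $p \leq \lambda_1 + \lambda_2$, and combined with $\lambda_1 + \lambda_2 \leq |\lambda| = p$ this forces $\lambda_1 + \lambda_2 = p$ with $\lambda_1 \in \{p-1, p\}$, i.e. $\lambda = (p-1,1)$ or $\lambda = (p)$, a contradiction. So $(p-1,1)$ lies in the complement of $\{\mu : \mu \leq \lambda\}$. Since $(p-1,1) \neq (p)$ and $(p)$ is the maximum of the whole poset, we have $(p-1,1) < (p)$, and therefore $(p)$ is not a minimal element of that complement. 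Thus $(p) \notin$ critical set of $\lambda$, which proves the lemma.

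I do not expect a genuine obstacle: once the problem is phrased in terms of the order $\leq$ it is a two-line argument. The one point deserving care is the exact description of which $\lambda$ satisfy $(p-1,1) \leq \lambda$; this is precisely where the hypothesis $\lambda \neq (p), (p-1,1)$ enters, and it also explains why these two diagrams must be excluded — for $\lambda = (p-1,1)$ the complement of $\{\mu : \mu \leq \lambda\}$ is exactly $\{(p)\}$, so the critical set of $(p-1,1)$ is the single one-row diagram.
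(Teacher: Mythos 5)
Your proof is correct, and it is tighter than the one in the paper. Both arguments ultimately rest on the same fact — $(p)$ is the unique one-row diagram with $p$ boxes, and $(p-1,1)$ is the unique diagram sitting immediately below it in the order $\leq$ — but the executions differ. The paper exhibits the chain $(1,\ldots,1)\leq(2,1,\ldots,1)\leq\cdots\leq(p-1,1)\leq(p)$ and asserts that ``the critical set of each diagram is the next one to the right,'' which is an overstatement: for $p\geq 6$ the dominance order is not total (e.g.\ $(3,3)$ and $(4,1,1)$ are incomparable) and critical sets can contain several incomparable diagrams, so the paper's proof really only establishes, somewhat informally, that a one-row diagram cannot occur in the critical set unless $\lambda=(p-1,1)$. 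Your version proves exactly that statement rigorously: you show $(p-1,1)\leq\lambda$ forces $\lambda\in\{(p),(p-1,1)\}$, so for all other $\lambda$ the diagram $(p-1,1)$ lies in the complement of $\{\mu:\mu\leq\lambda\}$ strictly below $(p)$, whence $(p)$ is not minimal there and cannot belong to the critical set. This buys a proof that is insensitive to the non-total structure of the dominance order and makes transparent why precisely $(p)$ and $(p-1,1)$ must be excluded.
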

	
	\begin{proof}
	By using the partial order explained before we have the following sequence within the set of the Young diagrams with $p$ boxes:
	$$(\underbrace{1,\ldots,1}_{p})\leq (2,\underbrace{1,\ldots,1}_{p-2})\leq (2,2,\underbrace{1,\ldots,1}_{p-4})\leq\ldots$$
	$$\ldots\leq (p-2,2)\leq (p-1,1)\leq (p)$$
	The critical set of each diagram is the next one to the right in this sequence. From the definition of partial order there is 
	no manner in which we can have a diagram with one row inside the sequence. 
	In this context, the only possibilities are $(p-1,1)$ (the critical set is $(p)$)
	and $(p)$ (the critical set is $\emptyset$).   
	\end{proof}
	

	\textbf{Main Theorem.} \textit{When $A\in\Gamma^{\lambda} V\otimes S_{\lambda}V\subseteq V^{\otimes p}$, $|\lambda|=p$ and $p\ge 2$, 
	$Det(A)$ can be nonzero only for $\lambda=(p)$ (corresponding to the  symmetric power ${\mathrm Sym}^p V$) 
	and $\lambda=(p-1,1)$, where $\Gamma^{\lambda}V$ (resp. $S_{\lambda}V$) is the  $\Sigma_p$-module 
	(resp. $GL(V)-$module) associated to $\lambda$.}

	\begin{proof}
	The critical set of $\lambda$ is always a Young diagram with at least two rows by Lemma \ref{lem2}. The proof is complete 
	applying Lemma \ref{lem1}.
	\end{proof}

\end{document}